\newtheorem{thm}{Theorem}
\theoremstyle{definition}
\newtheorem{defn}[thm]{Definition}
\theoremstyle{remark}
\numberwithin{equation}{section}
\begin{document}

\title[Bivariate Cheney-Sharma operators ]{Bivariate Cheney-Sharma  operators on  simplex }%

\author[ G\"{U}LEN BA\c{S}CANBAZ-TUNCA,  AY\c{S}EG\"{U}L EREN\c{C}\.{I}N, HATİCE G\"{U}L \.{I}NCE-\.{I}LARSLAN ]
{G\"{U}LEN BA\c{S}CANBAZ-TUNCA*,
 AY\c{S}EG\"{U}L EREN\c{C}\.{I}N**, HATİCE G\"{U}L \.{I}NCE-\.{I}LARSLAN***}

\newcommand{\acr}{\newline\indent}
\address{\llap{*\,}Ankara University\acr
Faculty of Science\acr Department of Mathematics\acr 06100,
Tando\v{g}an, Ankara, Turkey} \email{tunca@science.ankara.edu.tr}
\address{\llap{**\,}Abant \.{I}zzet Baysal University\acr
Faculty of Arts and Science\acr Department of Mathematics\acr 14280,
Bolu, Turkey} \email{erencina@hotmail.com}
\address{\llap{***\,}Gazi University\acr
Faculty of Arts and Science\acr Department of Mathematics\acr 06500,
Teknikokullar, Ankara, Turkey} \email{ince@gazi.edu.tr}

\subjclass[2000]{ 41A36}%

\keywords{Lipschitz continuous function, modulus of continuity function, non-tensor Cheney-Sharma operators}%s}%

% ----------------------------------------------------------------
\begin{abstract}
 In this paper, we consider bivariate Cheney-Sharma operators which are not the
tensor product construction. Precisely, we show that these operators
 preserve Lipschitz condition of a given Lipschitz continuous
function  $f$ and also the properties of the  modulus of continuity
function when $f$ is a modulus of continuity function.
\end{abstract}
% ----------------------------------------------------------------
\maketitle
% ----------------------------------------------------------------
\section{\textbf{Introduction}}
The most celebrated linear positive operators for the uniform
approximation of continuous real valued functions on $[0,1]$ are
Bernstein polynomials. As it is well known, besides approximation
results, Bernstein polynomials have some nice retaining properties.
The most referred study in this direction was due to Brown, Elliott
and Paget  \cite{bep} where they gave an elementary proof for the
preservation of the Lipschitz constant and order of a Lipschitz
continuous function by the Bernstein polynomials. Whereas, Lindvall
previously obtained this result in terms of probabilistic methods in
\cite{lin}. Moreover, in \cite{li} Li proved that Bernstein
polynomials also preserve the properties of the  function of modulus
of continuity. The same problems for some other type univariate or
multivariate linear positive operators were solved by either using
an elementary or probabilistic way (see, e.g. \cite{gy}-\cite{gcs},
\cite{fc}, \cite{cao}, \cite{agf}, \cite{agf1}, \cite{kh},
\cite{khp}, \cite{ti}).

 In Abel-Jensen identity (see
\cite{al}, p.326)
\begin{equation}\label{eq:1.1}
(u+v)\left(u+v+m\beta\right)^{m-1}=\sum^{m}_{k=0}\binom{m}{k}u\left(u+k\beta\right)^{k-1}v\left[v+(m-k)\beta\right]^{m-k-1}
\end{equation}
where $u$, $v$ and $\beta \in \mathbb{R }$, by taking $u=x$, $v=1-x$
and $m=n$, Cheney-Sharma \cite{cs} introduced the following
Bernstein type operators for $f\in C[0,1]$, $x\in[0,1]$ and
$n\in\mathbb{N}$

\begin{equation*}
Q^{\beta}_{n}\left(f;x\right)=\left(1+n\beta\right)^{1-n}\sum^{n}_{k=0}f\left(\frac{k}{n}\right)
\binom{n}{k}x\left(x+k\beta\right)^{k-1}(1-x)\big[1-x+(n-k)\beta\big]^{n-k-1},
\end{equation*}
where $\beta$ is a nonnegative real parameter. For these operators,
 tensor product of them  and their some generalizations
we can cite the papers \cite{ar}, \cite{ca}, \cite{c},
\cite{sc}-\cite{ta} and the monograph \cite{al}. Remark that from
\cite{cs} and \cite{sc} we know that $Q^{\beta}_{n}$ operators
reproduce constant functions and linear functions. Very recently, in
\cite{gcs} the authors showed that univariate Cheney-Sharma
operators  preserve the Lipschitz constant and order of a Lipschitz
continuous function as well as the properties of the function of
modulus of continuity.

We now introduce the notations, some needful definitions and the
construction of the bivariate operators.
\\ Throughout the paper, we
shall use the standard notations given
below. \\
Let $\mathbf x=(x_{1},x_{2})\in\mathbb{R}^{2}$, $\mathbf
k=(k_{1},k_{2})\in \mathbb{N}_{0}^{2}$ , $\mathbf e=(1,1)$, $\mathbf
0=(0,0)$ ,$0\leq\beta\in\mathbb{R}$ and $n\in\mathbb{N}$. We denote
as usual
$$|\mathbf x|:=x_{1}+x_{2},\quad\mathbf x^{\mathbf k}:=x^{k_{1}}_{1}x^{k_{2}}_{2},\quad |\mathbf k|:=k_{1}+k_{2},\quad\mathbf k\mathbf!:=k_{1}!k_{2}!
,\quad \beta\mathbf x=(\beta x_{1},\beta x_{2})$$ and
$$\binom{n}{\mathbf k}:=\frac{n!}{\mathbf k\mathbf!(n-|\mathbf k|)!}, \quad  \sum_{|\mathbf k|\leq n}:=\sum^{n}_{k_{1}=0}\sum^{n-k_{1}}_{k_{2}=0}.$$
We also denote  the two dimensional simplex
 by
$$S:=\left\{\mathbf x=(x_{1},x_{2})\in\mathbb{R}^{2}:x_{1},x_{2}\geq 0, |\mathbf x|\leq 1\right\}.$$
Moreover, $\mathbf x\leq \mathbf y$ stands for $x_{i}\leq y_{i}$,
$i=1,2$.

We now construct the non-tensor product Cheney-Sharma operators.
From (\ref{eq:1.1}) it is clear that
\begin{equation*}
\left(1+n\beta\right)^{n-1}=\sum^{n}_{k_{1}=0}\binom{n}{k_{1}}x_{1}\left(x_{1}+k_{1}\beta\right)^{k_{1}-1}(1-x_{1})\left[1-x_{1}+(n-k_{1})\beta\right]^{n-k_{1}-1}.
\end{equation*}
In (\ref{eq:1.1}), taking $u=x_{2}$, $v=1-x_{1}-x_{2}$ and
$m=n-k_{1}$ we have
\begin{equation*}
\begin{split}
(1-x_{1})\left[1-x_{1}+(n-k_{1})\beta\right]^{n-k_{1}-1}=&\sum^{n-k_{1}}_{k_{2}=0}\binom{n-k_{1}}{k_{2}}x_{2}\left(x_{2}+k_{2}\beta\right)^{k_{2}-1}(1-x_{1}-x_{2})\\
&\times\left[1-x_{1}-x_{2}+(n-k_{1}-k_{2})\beta\right]^{n-k_{1}-k_{2}-1}.
\end{split}
\end{equation*}
Using this result in the above equality we find that
\begin{equation*}
\begin{split}
\left(1+n\beta\right)^{n-1}=&\sum^{n}_{k_{1}=0}\sum^{n-k_{1}}_{k_{2}=0}\binom{n}{k_{1}}\binom{n-k_{1}}{k_{2}}x_{1}x_{2}
\left(x_{1}+k_{1}\beta\right)^{k_{1}-1}\left(x_{2}+k_{2}\beta\right)^{k_{2}-1}\\
&\times(1-x_{1}-x_{2})\left[1-x_{1}-x_{2}+(n-k_{1}-k_{2})\beta\right]^{n-k_{1}-k_{2}-1}\\
=&\sum_{|\mathbf k|\leq n}\binom{n}{\mathbf k}\mathbf x^{\mathbf
e}(\mathbf x+\mathbf k \beta)^{\mathbf k-\mathbf e}\left(1-|\mathbf
x|\right)\left[1-|\mathbf x|+(n-|\mathbf k|)\beta\right]^{n-|\mathbf
k|-1}.
\end{split}
\end{equation*}
or
\begin{equation*}
1=\left(1+n\beta\right)^{1-n} \sum_{|\mathbf k|\leq
n}\binom{n}{\mathbf k}\mathbf x^{\mathbf e}(\mathbf x+\mathbf k
\beta)^{\mathbf k-\mathbf e}\left(1-|\mathbf
x|\right)\left[1-|\mathbf x|+(n-|\mathbf k|)\beta\right]^{n-|\mathbf
k|-1}.
\end{equation*}

 In this paper, for a continuous real valued function $f$ defined
on $S$ we consider the non-tensor product bivariate extension of the
operators $Q^{\beta}_{n}(f;x)$ defined by
\begin{equation}\label{eq:2.1}
\begin{split}
G^{\beta}_{n}(f;\mathbf
x)=&\left(1+n\beta\right)^{1-n}\sum_{|\mathbf k|\leq
n}f\left(\frac{\mathbf k}{n}\right)\binom{n}{\mathbf k}\mathbf
x^{\mathbf e}(\mathbf x+\mathbf k
\beta)^{\mathbf k-\mathbf e}\\
&\times\left(1-|\mathbf x|\right)\left[1-|\mathbf x|+(n-|\mathbf
k|)\beta\right]^{n-|\mathbf k|-1}
\end{split}
\end{equation}
where $\beta$ is a nonnegative real parameter, $\mathbf x\in S$ and
$n\in\mathbb{N}$. We observe that for $\beta=0$ these operators
reduce to non-tensor product bivariate Bernstein polynomials (see
\cite{df},\cite{fa}).
\begin{defn}(see, e.g.\cite{cao})
A continuous  real valued function $f$ defined on $A\subseteq
\mathbb{R}^{2}$
 is said to be Lipschitz continuous function of
order $\mu$ , $0<\mu\leq 1$ on $A$, if there exists $M> 0$ such that

$$\left|f(\mathbf x)-f(\mathbf y)\right|\leq M \sum^{2}_{i=1}\left|x_{i}-y_{i}\right|^{\mu}$$
for all $\mathbf x,\mathbf y\in A$. The set of Lipschitz continuous
functions of order $\mu$ with Lipschitz constant $M$ on $A$ is
denoted by
$Lip_{M}(\mu,A)$.\\
\end{defn}

\begin{defn}(see, e.g.\cite{fc})
If a bivariate nonnegative and continuous function $\omega(\mathbf
u)$ satisfies the following conditions,
then it is called a function of  modulus of continuity.\\
 $(a)$ $\omega(\mathbf 0)=0$,\\
 $(b)$ $\omega(\mathbf u)$ is a non-decreasing function in $\mathbf
 u$, i.e., $\omega(\mathbf u)\geq\omega(\mathbf v)$ for $\mathbf u\geq\mathbf
 v$,\\
 $(c)$ $\omega(\mathbf u)$ is semi-additive, i.e., $\omega(\mathbf u+\mathbf v)\leq\omega(\mathbf u)+\omega(\mathbf
 v)$.
\end{defn}

\section{\textbf{Main results }}
In this section,  inspired by the paper of Cao, Ding and Xu
\cite{cao}, including preservation properties of multivariate
Baskakov operators, we show that  non-tensor product Cheney-Sharma
operators defined by $G^{\beta}_{n}$  preserve the Lipschitz
condition of a given Lipschitz continuous function $f$ and
properties of the function of modulus of continuity when the
attached function $f$ is a modulus of continuity function.

\begin{thm}
If $f\in Lip_{M}(\mu,S)$, then $G^{\beta}_{n}(f)\in Lip_{M}(\mu,S)$
for all $n \in\mathbb{N}$.
\end{thm}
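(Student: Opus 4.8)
The plan is to show that the bivariate Cheney-Sharma operator preserves the Lipschitz condition by exploiting the probabilistic/convolution structure of its kernel. The key observation is that the numbers
$$
p_{n,\mathbf k}(\mathbf x) := \left(1+n\beta\right)^{1-n}\binom{n}{\mathbf k}\mathbf x^{\mathbf e}(\mathbf x+\mathbf k \beta)^{\mathbf k-\mathbf e}\left(1-|\mathbf x|\right)\left[1-|\mathbf x|+(n-|\mathbf k|)\beta\right]^{n-|\mathbf k|-1}
$$
form a partition of unity on $S$ (this is exactly the Abel-Jensen identity derived in the Introduction), and that $G^{\beta}_n$ reproduces affine functions (recalled from \cite{cs}, \cite{sc}). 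For fixed $\mathbf x, \mathbf y \in S$, I would write
$$
G^{\beta}_{n}(f;\mathbf x) - G^{\beta}_{n}(f;\mathbf y) = \sum_{|\mathbf k|\leq n}\sum_{|\mathbf j|\leq n} \left[ f\!\left(\tfrac{\mathbf k}{n}\right) - f\!\left(\tfrac{\mathbf j}{n}\right)\right] p_{n,\mathbf k}(\mathbf x)\, p_{n,\mathbf j}(\mathbf y),
$$
apply the hypothesis $f\in Lip_{M}(\mu,S)$ to each bracket to get the bound $M\sum_{i=1}^{2}\left|\tfrac{k_i}{n} - \tfrac{j_i}{n}\right|^{\mu}$, and then (since $0<\mu\leq 1$, so $t\mapsto t^{\mu}$ is concave) push the coordinate sums inside using Jensen's inequality for the probability weights. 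This reduces the problem to estimating, for each coordinate $i$,
$$
\sum_{|\mathbf k|\leq n}\sum_{|\mathbf j|\leq n} \left|\tfrac{k_i}{n} - \tfrac{j_i}{n}\right| p_{n,\mathbf k}(\mathbf x)\, p_{n,\mathbf j}(\mathbf y) \;\leq\; \left| G^{\beta}_{n}(e_i;\mathbf x) - G^{\beta}_{n}(e_i;\mathbf y)\right|
$$
is \emph{not} immediately valid because of the absolute value; the honest route is to bound $\left|\tfrac{k_i}{n}-\tfrac{j_i}{n}\right|$ by a telescoping/coupling argument.

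**The technical heart** of the argument, and what I expect to be the main obstacle, is constructing a suitable \emph{coupling} between the two index distributions $\{p_{n,\mathbf k}(\mathbf x)\}$ and $\{p_{n,\mathbf j}(\mathbf y)\}$ that is monotone in $\mathbf x$ versus $\mathbf y$. In the Bernstein case (Brown–Elliott–Paget \cite{bep}, and via urn models Lindvall \cite{lin}) one uses that the Bernstein kernel is a product of binomials and one can build a monotone coupling coordinatewise; here the Cheney-Sharma weights involve the Abel polynomials $(\mathbf x + \mathbf k\beta)^{\mathbf k - \mathbf e}$, which interlock the two coordinates through the simplex constraint $|\mathbf k|\le n$, so the coupling cannot be done coordinate-by-coordinate independently. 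Following \cite{gcs} for the univariate case and \cite{cao} for the multivariate philosophy, I would proceed by first reducing to the case where $\mathbf x$ and $\mathbf y$ differ in only one coordinate (by the triangle inequality applied to an intermediate point), and then for that one-coordinate move, expand $G^{\beta}_{n}(f;\cdot)$ as an iterated sum — first over $k_1$, then over $k_2$ — so that the inner sum is literally a univariate Cheney-Sharma operator $Q^{(n-k_1)\beta/\text{(scaled)}}$ acting on the section of $f$. The univariate preservation result of \cite{gcs} then handles the inner layer, and a second application handles the outer layer; the nonnegativity of all weights and the partition-of-unity property glue the two layers together without loss of the constant $M$ or the order $\mu$.

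**To organize the write-up**, I would isolate three lemmas: (i) the representation of $G^{\beta}_n$ as an iterated univariate Cheney-Sharma operator along the two simplex directions; (ii) the elementary inequality that for nonnegative weights $w_\alpha$ summing to $1$ and $0<\mu\le 1$ one has $\sum_\alpha w_\alpha |t_\alpha|^{\mu} \ge \big|\sum_\alpha w_\alpha t_\alpha\big|$ reversed appropriately, i.e. the concavity step $\left(\sum w_\alpha |t_\alpha|\right)^{\mu} \ge \sum w_\alpha |t_\alpha|^{\mu}$ is the wrong direction, so instead I use $|t_\alpha|^{\mu}\le$ (something linear) only after the coupling is in place — this is the subtle point to get right; and (iii) the univariate result $Q^{\beta}_n(\,Lip_M(\mu,[0,1])\,)\subseteq Lip_M(\mu,[0,1])$ cited from \cite{gcs}. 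Combining (i)–(iii): for $\mathbf x \le \mathbf y$ (or after splitting into such moves), $\left|G^{\beta}_n(f;\mathbf x) - G^{\beta}_n(f;\mathbf y)\right|$ telescopes through one-variable slices, each estimated by the univariate theorem, yielding $\le M\sum_{i=1}^2 |x_i - y_i|^{\mu}$, which is exactly $G^{\beta}_n(f)\in Lip_M(\mu,S)$. The delicate bookkeeping is making sure the parameter $\beta$ that appears in the inner univariate operator (where the "number of trials" is $n-k_1$ and the variable lives in a shifted interval) is handled by the \emph{same} constant $M$ and order $\mu$ — but since $Lip_M(\mu,\cdot)$ is preserved by affine reparametrizations of the domain of the form used here, this causes no deterioration.
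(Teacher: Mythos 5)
Your proposal correctly diagnoses the crux of the problem --- that the naive product-weight expansion $\sum_{\mathbf k}\sum_{\mathbf j}[f(\mathbf k/n)-f(\mathbf j/n)]\,p_{n,\mathbf k}(\mathbf x)p_{n,\mathbf j}(\mathbf y)$ fails because $\sum|k_i/n-j_i/n|\,p_{n,\mathbf k}(\mathbf x)p_{n,\mathbf j}(\mathbf y)$ overshoots $|x_i-y_i|$, and that a monotone coupling of the two index distributions is what is needed --- but you never actually construct that coupling, and this is precisely where the paper's proof lives. The key point you are missing is that the Abel--Jensen identity \emph{itself} supplies the coupling: writing $y_i(y_i+i_i\beta)^{i_i-1}$ as a sum over $k_i\le i_i$ of Abel weights in $x_i$ and $y_i-x_i$, and conversely expanding the tail factor $(1-|\mathbf x|)[\,\cdots]^{n-|\mathbf k|-1}$ of $G^{\beta}_{n}(f;\mathbf x)$ in the increments $y_i-x_i$, puts \emph{both} $G^{\beta}_{n}(f;\mathbf y)$ and $G^{\beta}_{n}(f;\mathbf x)$ over the \emph{same} nonnegative weight system indexed by $(\mathbf k,\mathbf l)$, with $f((\mathbf k+\mathbf l)/n)$ in one and $f(\mathbf k/n)$ in the other. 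The Lipschitz bound on the bracket then depends only on $\mathbf l$, the $\mathbf k$-sum collapses back to $1$ by Abel--Jensen, and the remainder is $M[G^{\beta}_{n}(t_1^{\mu};\mathbf y-\mathbf x)+G^{\beta}_{n}(t_2^{\mu};\mathbf y-\mathbf x)]$, finished by H\"older plus reproduction of constants and linear functions. Without this explicit re-expansion your argument has no engine.

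Your fallback route --- slicing into one-coordinate moves and invoking the univariate result of \cite{gcs} on sections --- is not wrong in spirit, but the step you dismiss as ``delicate bookkeeping'' that ``causes no deterioration'' is in fact false as stated. For a move in $x_2$ with $x_1$ fixed, the inner sum over $k_2$ is a univariate Cheney--Sharma operator with $n-k_1$ nodes on the interval $[0,1-x_1]$ applied to the section $s\mapsto f(k_1/n,\,(n-k_1)s/n)$; after the affine reparametrization this section has Lipschitz constant $M\big((n-k_1)/(n(1-x_1))\big)^{\mu}$, which \emph{exceeds} $M$ whenever $k_1/n<x_1$ (e.g.\ $k_1=0$, $x_1>0$). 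Recovering the constant $M$ requires averaging these slice constants over $k_1$ against the outer weights, applying H\"older with exponent $1/\mu$, and using that the marginal of $k_1/n$ has mean $x_1$ --- none of which appears in your outline. So either supply that additional argument, or (better) construct the coupling explicitly via the Abel--Jensen identity as the paper does; as written, the proof has a genuine gap at its central step.
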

\begin{proof}
Let $\mathbf x,\mathbf y\in S$ such that $\mathbf y\geq \mathbf x$.
From (\ref{eq:2.1}) we have
\begin{equation*}
\begin{split}
G^{\beta}_{n}(f;\mathbf
y)=&\left(1+n\beta\right)^{1-n}\sum^{n}_{i_{1}=0}\sum^{n-i_{1}}_{i_{2}=0}f\left(\frac{\mathbf
i}{n}\right)\binom{n}{\mathbf i}\mathbf y^{\mathbf e}\left(\mathbf
y+\mathbf i\beta\right)^{\mathbf i-\mathbf e}\\
&\times\left(1-|\mathbf y|\right)\left[1-|\mathbf y|+(n-|\mathbf
i|)\beta\right]^{n-|\mathbf i|-1}\\
=&\left(1+n\beta\right)^{1-n}\sum^{n}_{i_{1}=0}\sum^{n-i_{1}}_{i_{2}=0}f\left(\frac{\mathbf
i}{n}\right)\binom{n}{\mathbf
i}y_{1}\left(y_{1}+i_{1}\beta\right)^{i_{1}-1}\\
&\times y_{2}\left(y_{2}+i_{2}\beta\right)^{i_{2}-1}\left(1-|\mathbf
y|\right)\left[1-|\mathbf y|+(n-|\mathbf i|)\beta\right]^{n-|\mathbf
i|-1}.
\end{split}
\end{equation*}
Setting $u=x_{1}$, $v=y_{1}-x_{1}$, $m=i_{1}$ and  $u=x_{2}$,
$v=y_{2}-x_{2}$, $m=i_{2}$, respectively, in (\ref{eq:1.1}), we find
\begin{equation*}
y_{1}\left(y_{1}+i_{1}\beta\right)^{i_{1}-1}=\sum^{i_{1}}_{k_{1}=0}\binom{i_{1}}{k_{1}}x_{1}\left(x_{1}+k_{1}\beta\right)^{k_{1}-1}\left(y_{1}-x_{1}\right)
\left[y_{1}-x_{1}+(i_{1}-k_{1})\beta\right]^{i_{1}-k_{1}-1}
\end{equation*}
and
\begin{equation*}
y_{2}\left(y_{2}+i_{2}\beta\right)^{i_{2}-1}=\sum^{i_{2}}_{k_{2}=0}\binom{i_{2}}{k_{2}}x_{2}\left(x_{2}+k_{2}\beta\right)^{k_{2}-1}\left(y_{2}-x_{2}\right)
\left[y_{2}-x_{2}+(i_{2}-k_{2})\beta\right]^{i_{2}-k_{2}-1}.
\end{equation*}
Therefore,
\begin{equation*}
\begin{split}
G^{\beta}_{n}(f;\mathbf
y)=&\left(1+n\beta\right)^{1-n}\sum^{n}_{i_{1}=0}\sum^{n-i_{1}}_{i_{2}=0}f\left(\frac{\mathbf
i}{n}\right)\binom{n}{\mathbf
i}\sum^{i_{1}}_{k_{1}=0}\sum^{i_{2}}_{k_{2}=0}\binom{i_{1}}{k_{1}}\binom{i_{2}}{k_{2}}x_{1}x_{2}\\
&\times\left(x_{1}+k_{1}\beta\right)^{k_{1}-1}\left(x_{2}+k_{2}\beta\right)^{k_{2}-1}\left(y_{1}-x_{1}\right)\left(y_{2}-x_{2}\right)\\
&\times\left[y_{1}-x_{1}+(i_{1}-k_{1})\beta\right]^{i_{1}-k_{1}-1}\left[y_{2}-x_{2}+(i_{2}-k_{2})\beta\right]^{i_{2}-k_{2}-1}\\
&\times\left(1-|\mathbf y|\right)\left[1-|\mathbf y|+(n-|\mathbf
i|)\beta\right]^{n-|\mathbf i|-1}\\
=&\left(1+n\beta\right)^{1-n}\sum^{n}_{i_{1}=0}\sum^{i_{1}}_{k_{1}=0}\sum^{n-i_{1}}_{i_{2}=0}\sum^{i_{2}}_{k_{2}=0}f\left(\frac{\mathbf
i}{n}\right)\frac{n!}{\mathbf k!(n-|\mathbf
i|)!(i_{1}-k_{1})!(i_{2}-k_{2})!}\\
&\times\mathbf x^{\mathbf e}(\mathbf x+\mathbf k \beta)^{\mathbf
k-\mathbf e}(\mathbf y-\mathbf x)^{\mathbf e}\left[\mathbf y-\mathbf
x+(\mathbf i-\mathbf k)\beta\right]^{\mathbf i-\mathbf
k-\mathbf e}\\
&\times\left(1-|\mathbf y|\right)\left[1-|\mathbf y|+(n-|\mathbf
i|)\beta\right]^{n-|\mathbf i|-1}.
\end{split}
\end{equation*}
Changing the order of the above summations and then letting $\mathbf
i-\mathbf k=\mathbf l$ we obtain
\begin{equation}\label{eq:3.1}
\begin{split}
G^{\beta}_{n}(f;\mathbf
y)=&\left(1+n\beta\right)^{1-n}\sum^{n}_{k_{1}=0}\sum^{n}_{i_{1}=k_{1}}\sum^{n-i_{1}}_{k_{2}=0}\sum^{n-i_{1}}_{i_{2}=k_{2}}f\left(\frac{\mathbf
i}{n}\right)\frac{n!}{\mathbf k!(n-|\mathbf
i|)!(i_{1}-k_{1})!(i_{2}-k_{2})!}\\
&\times\mathbf x^{\mathbf e}(\mathbf x+\mathbf k \beta)^{\mathbf
k-\mathbf e}(\mathbf y-\mathbf x)^{\mathbf e}\left[(\mathbf
y-\mathbf x)+(\mathbf i-\mathbf k)\beta\right]^{\mathbf i-\mathbf
k-\mathbf e}\\
&\times\left(1-|\mathbf y|\right)\left[1-|\mathbf y|+(n-|\mathbf
i|)\beta\right]^{n-|\mathbf i|-1}\\
=&\left(1+n\beta\right)^{1-n}\sum^{n}_{k_{1}=0}\sum^{n-k_{1}}_{l_{1}=0}\sum^{n-k_{1}-l_{1}}_{k_{2}=0}\sum^{n-|\mathbf
k|-l_{1}}_{l_{2}=0}f\left(\frac{\mathbf k+\mathbf
l}{n}\right)\frac{n!}{\mathbf k!\mathbf l!(n-|\mathbf k|-|\mathbf
l|)!}\\
&\times\mathbf x^{\mathbf e}(\mathbf x+\mathbf k \beta)^{\mathbf
k-\mathbf e}(\mathbf y-\mathbf x)^{\mathbf e}(\mathbf y-\mathbf
x+\mathbf l\beta)^{\mathbf l-\mathbf
e}\\
&\times\left(1-|\mathbf y|\right)\left[1-|\mathbf y|+(n-|\mathbf
k|-|\mathbf l|)\beta\right]^{n-|\mathbf k|-|\mathbf l|-1}.
\end{split}
\end{equation}
On the other hand,
\begin{equation*}
\begin{split}
G^{\beta}_{n}(f;\mathbf
x)=&\left(1+n\beta\right)^{1-n}\sum^{n}_{k_{1}=0}\sum^{n-k_{1}}_{k_{2}=0}f\left(\frac{\mathbf
k}{n}\right)\binom{n}{\mathbf k}\mathbf x^{\mathbf e}(\mathbf
x+\mathbf k
\beta)^{\mathbf k-\mathbf e}\\
&\times\left(1-|\mathbf x|\right)\left[1-|\mathbf x|+(n-|\mathbf
k|)\beta\right]^{n-|\mathbf k|-1}.
\end{split}
\end{equation*}
In (\ref{eq:1.1}), if we put $y_{1}-x_{1}$, $1-y_{1}-x_{2}$ and
$n-|\mathbf k|$ in place of $u$, $v$ and $m$, respectively, one has
\begin{equation*}
\begin{split}
&\left(1-|\mathbf x|\right)\left[1-|\mathbf x|+(n-|\mathbf
k|)\beta\right]^{n-|\mathbf k|-1}\\=&\sum^{n-|\mathbf
k|}_{l_{1}=0}\binom{{n-|\mathbf
k|}}{l_{1}}(y_{1}-x_{1})(y_{1}-x_{1}+l_{1}\beta)^{l_{1}-1}(1-y_{1}-x_{2})\\
&\times\left[1-y_{1}-x_{2}+(n-|\mathbf
k|-l_{1})\beta\right]^{n-|\mathbf k|-l_{1}-1}.
\end{split}
\end{equation*}
Again in the equality (\ref{eq:1.1}), we replace $u$, $v$ and $m$ by
$y_{2}-x_{2}$, $1-|\mathbf y|$ and $m=n-|\mathbf k|-l_{1}$,
respectively, we find
\begin{equation*}
\begin{split}
&(1-y_{1}-x_{2})\left[1-y_{1}-x_{2}+(n-|\mathbf
k|-l_{1})\beta\right]^{n-|\mathbf k|-l_{1}-1}\\
=&\sum^{n-|\mathbf k|-l_{1}}_{l_{2}=0}\binom{{n-|\mathbf
k|-l_{1}}}{l_{2}}(y_{2}-x_{2})(y_{2}-x_{2}+l_{2}\beta)^{l_{2}-1}\\
&\times(1-|\mathbf y|)\left[1-|\mathbf y|+(n-|\mathbf k|-|\mathbf
l|)\beta\right]^{n-|\mathbf k|-|\mathbf l|-1}.
\end{split}
\end{equation*}
Making use of this in the above equality leads to
\begin{equation*}
\begin{split}
&\left(1-|\mathbf x|\right)\left[1-|\mathbf x|+(n-|\mathbf
k|)\beta\right]^{n-|\mathbf k|-1}\\=&\sum^{n-|\mathbf
k|}_{l_{1}=0}\sum^{n-|\mathbf k|-l_{1}}_{l_{2}=0}\binom{{n-|\mathbf
k|}}{\mathbf l}(\mathbf y-\mathbf x)^{\mathbf e}(\mathbf y-\mathbf
x+\mathbf
l\beta)^{\mathbf l-\mathbf e}\\
&\times\left(1-|\mathbf y|\right)\left[1-|\mathbf y|+(n-|\mathbf
k|-|\mathbf l|)\beta\right]^{n-|\mathbf k|-|\mathbf l|-1}.
\end{split}
\end{equation*}
Thus we conclude that
\begin{equation*}
\begin{split}
G^{\beta}_{n}(f;\mathbf
x)=&\left(1+n\beta\right)^{1-n}\sum^{n}_{k_{1}=0}\sum^{n-k_{1}}_{k_{2}=0}\sum^{n-|\mathbf
k|}_{l_{1}=0}\sum^{n-|\mathbf
k|-l_{1}}_{l_{2}=0}f\left(\frac{\mathbf
k}{n}\right)\frac{n!}{\mathbf k!\mathbf l!(n-|\mathbf k|-|\mathbf
l|)!} \\
&\times\mathbf x^{\mathbf e}(\mathbf x+\mathbf k \beta)^{\mathbf
k-\mathbf e}(\mathbf y-\mathbf x)^{\mathbf e} (\mathbf y-\mathbf
x+\mathbf
l\beta)^{\mathbf l-\mathbf e}\\
&\times \left(1-|\mathbf y|\right)\left[1-|\mathbf y|+(n-|\mathbf
k|-|\mathbf l|)\beta\right]^{n-|\mathbf k|-|\mathbf l|-1}.
\end{split}
\end{equation*}
Now changing the order of the two summations in the middle, we
obtain
\begin{equation}\label{eq:3.2}
\begin{split}
G^{\beta}_{n}(f;\mathbf x)
=&\left(1+n\beta\right)^{1-n}\sum^{n}_{k_{1}=0}\sum^{n-k_{1}}_{l_{1}=0}\sum^{n-k_{1}-l_{1}}_{k_{2}=0}\sum^{n-|\mathbf
k|-l_{1}}_{l_{2}=0}f\left(\frac{\mathbf
k}{n}\right)\frac{n!}{\mathbf k!\mathbf l!(n-|\mathbf k|-|\mathbf
l|)!}\\
&\times\mathbf x^{\mathbf e}(\mathbf x+\mathbf k \beta)^{\mathbf
k-\mathbf e}(\mathbf y-\mathbf x)^{\mathbf e}(\mathbf y-\mathbf
x+\mathbf l\beta)^{\mathbf l-\mathbf
e}\\
&\times\left(1-|\mathbf y|\right)\left[1-|\mathbf y|+(n-|\mathbf
k|-|\mathbf l|)\beta\right]^{n-|\mathbf k|-|\mathbf l|-1}.
\end{split}
\end{equation}
So, from (\ref{eq:3.1}) and (\ref{eq:3.2}) it follows that
\begin{equation*}
\begin{split}
G^{\beta}_{n}(f;\mathbf y)-G^{\beta}_{n}(f;\mathbf x)
=&\left(1+n\beta\right)^{1-n}\sum^{n}_{k_{1}=0}\sum^{n-k_{1}}_{l_{1}=0}\sum^{n-k_{1}-l_{1}}_{k_{2}=0}\sum^{n-|\mathbf
k|-l_{1}}_{l_{2}=0}\left[f\left(\frac{\mathbf k+\mathbf
l}{n}\right)-f\left(\frac{\mathbf
k}{n}\right)\right]\\
&\times\frac{n!}{\mathbf k!\mathbf l!(n-|\mathbf k|-|\mathbf l|)!}
\mathbf x^{\mathbf e}(\mathbf x+\mathbf k \beta)^{\mathbf k-\mathbf
e}(\mathbf y-\mathbf x)^{\mathbf e}(\mathbf y-\mathbf x+\mathbf
l\beta)^{\mathbf l-\mathbf
e}\\
&\times\left(1-|\mathbf y|\right)\left[1-|\mathbf y|+(n-|\mathbf
k|-|\mathbf l|)\beta\right]^{n-|\mathbf k|-|\mathbf l|-1}.
\end{split}
\end{equation*}
Now interchanging the order of the summations two times successively
and using the equality
$$\frac{n!}{\mathbf k!\mathbf l!(n-|\mathbf k|-|\mathbf l|)!}=\binom{{n}}{\mathbf
l}\binom{{n-|\mathbf l|}}{k_{1}}\binom{{n-k_{1}}-|\mathbf
l|}{k_{2}}$$
 we find
\begin{equation*}
\begin{split}
G^{\beta}_{n}(f;\mathbf y)-G^{\beta}_{n}(f;\mathbf x)
=&\left(1+n\beta\right)^{1-n}\sum^{n}_{l_{1}=0}\sum^{n-l_{1}}_{k_{1}=0}\sum^{n-k_{1}-l_{1}}_{l_{2}=0}\sum^{n-k_{1}-|\mathbf
l|}_{k_{2}=0}\left[f\left(\frac{\mathbf k+\mathbf
l}{n}\right)-f\left(\frac{\mathbf
k}{n}\right)\right]\\
&\times\frac{n!}{\mathbf k!\mathbf l!(n-|\mathbf k|-|\mathbf l|)!}
\mathbf x^{\mathbf e}(\mathbf x+\mathbf k \beta)^{\mathbf k-\mathbf
e}(\mathbf y-\mathbf x)^{\mathbf e}(\mathbf y-\mathbf x+\mathbf
l\beta)^{\mathbf l-\mathbf
e}\\
&\times\left(1-|\mathbf y|\right)\left[1-|\mathbf y|+(n-|\mathbf
k|-|\mathbf l|)\beta\right]^{n-|\mathbf k|-|\mathbf l|-1}\\
=&\left(1+n\beta\right)^{1-n}\sum^{n}_{l_{1}=0}\sum^{n-l_{1}}_{l_{2}=0}\sum^{n-|\mathbf
l|}_{k_{1}=0}\sum^{n-k_{1}-|\mathbf
l|}_{k_{2}=0}\left[f\left(\frac{\mathbf k+\mathbf
l}{n}\right)-f\left(\frac{\mathbf
k}{n}\right)\right]\\
&\times\frac{n!}{\mathbf k!\mathbf l!(n-|\mathbf k|-|\mathbf l|)!}
\mathbf x^{\mathbf e}(\mathbf x+\mathbf k \beta)^{\mathbf k-\mathbf
e}(\mathbf y-\mathbf x)^{\mathbf e}(\mathbf y-\mathbf x+\mathbf
l\beta)^{\mathbf l-\mathbf
e}\\
&\times\left(1-|\mathbf y|\right)\left[1-|\mathbf y|+(n-|\mathbf
k|-|\mathbf l|)\beta\right]^{n-|\mathbf k|-|\mathbf l|-1}\\
=&\left(1+n\beta\right)^{1-n}\sum^{n}_{l_{1}=0}\sum^{n-l_{1}}_{l_{2}=0}\left[f\left(\frac{\mathbf
k+\mathbf l}{n}\right)-f\left(\frac{\mathbf
k}{n}\right)\right]\binom{{n}}{\mathbf l}\\
&\times(\mathbf y-\mathbf x)^{\mathbf e}(\mathbf y-\mathbf x+\mathbf
l\beta)^{\mathbf l-\mathbf e}\sum^{n-|\mathbf
l|}_{k_{1}=0}\binom{{n-|\mathbf
l|}}{k_{1}}x_{1}(x_{1}+k_{1}\beta)^{k_{1}-1}\\
&\times\sum^{n-k_{1}-|\mathbf l|}_{k_{2}=0}\binom{{n-k_{1}}-|\mathbf
l|}{k_{2}}x_{2}(x_{2}+k_{2}\beta)^{k_{2}-1}\\
&\times\left(1-|\mathbf y|\right)\left[1-|\mathbf y|+(n-|\mathbf
k|-|\mathbf l|)\beta\right]^{n-|\mathbf k|-|\mathbf l|-1}.
\end{split}
\end{equation*}
Taking $u=x_{2}$, $v=1-|\mathbf y|$ and $m=n-k_{1}-|\mathbf l|$ in
(\ref{eq:1.1}), it is easily seen that
\begin{equation*}
\begin{split}
&(x_{2}+1-|\mathbf y|)\left[x_{2}+1-|\mathbf y|+(n-k_{1}-|\mathbf
l|)\beta\right]^{n-k_{1}-|\mathbf l|-1} \\
=&\sum^{n-k_{1}-|\mathbf l|}_{k_{2}=0}\binom{{n-k_{1}}-|\mathbf
l|}{k_{2}}x_{2}(x_{2}+k_{2}\beta)^{k_{2}-1} \left(1-|\mathbf
y|\right)\left[1-|\mathbf y|+(n-|\mathbf k|-|\mathbf
l|)\beta\right]^{n-|\mathbf k|-|\mathbf l|-1}.
\end{split}
\end{equation*}
Hence we can write
\begin{equation*}
\begin{split}
G^{\beta}_{n}(f;\mathbf y)-G^{\beta}_{n}(f;\mathbf x)
=&\left(1+n\beta\right)^{1-n}\sum^{n}_{l_{1}=0}\sum^{n-l_{1}}_{l_{2}=0}\left[f\left(\frac{\mathbf
k+\mathbf l}{n}\right)-f\left(\frac{\mathbf
k}{n}\right)\right]\binom{{n}}{\mathbf l}\\
&\times(\mathbf y-\mathbf x)^{\mathbf e}(\mathbf y-\mathbf x+\mathbf
l\beta)^{\mathbf l-\mathbf e}\sum^{n-|\mathbf
l|}_{k_{1}=0}\binom{{n-|\mathbf
l|}}{k_{1}}x_{1}(x_{1}+k_{1}\beta)^{k_{1}-1}\\
&\times(x_{2}+1-|\mathbf y|)\left[x_{2}+1-|\mathbf
y|+(n-k_{1}-|\mathbf l|)\beta\right]^{n-k_{1}-|\mathbf l|-1}.
\end{split}
\end{equation*}
Again in (\ref{eq:1.1}),  we replace $u$, $v$ and $m$ by $x_{1}$,
$x_{2}+1-|\mathbf y|$ and $m=n-|\mathbf l|$, respectively, to obtain
\begin{equation*}
\begin{split}
&(x_{1}+x_{2}+1-|\mathbf y|)\left[x_{1}+x_{2}+1-|\mathbf
y|+(n-|\mathbf
l|)\beta\right]^{n-|\mathbf l|-1} \\
=&(1-|\mathbf y-\mathbf x|)\left[1-|\mathbf y-\mathbf x|+(n-|\mathbf
l|)\beta\right]^{n-|\mathbf
l|-1}\\
=&\sum^{n-|\mathbf l|}_{k_{1}=0}\binom{{n-|\mathbf
l|}}{k_{1}}x_{1}(x_{1}+k_{1}\beta)^{k_{1}-1} (x_{2}+1-|\mathbf
y|)\\&\times\left[x_{2}+1-|\mathbf y|+(n-k_{1}-|\mathbf
l|)\beta\right]^{n-k_{1}-|\mathbf l|-1}.
\end{split}
\end{equation*}
This leads to
\begin{equation}\label{eq:3.3}
\begin{split}
G^{\beta}_{n}(f;\mathbf y)-G^{\beta}_{n}(f;\mathbf x)
=&\left(1+n\beta\right)^{1-n}\sum^{n}_{l_{1}=0}\sum^{n-l_{1}}_{l_{2}=0}\left[f\left(\frac{\mathbf
k+\mathbf l}{n}\right)-f\left(\frac{\mathbf
k}{n}\right)\right]\binom{{n}}{\mathbf l}\\
&\times(\mathbf y-\mathbf x)^{\mathbf e}(\mathbf y-\mathbf x+\mathbf
l\beta)^{\mathbf l-\mathbf e}(1-|\mathbf y-\mathbf
x|)\\
&\times\left[1-|\mathbf y-\mathbf x|+(n-|\mathbf
l|)\beta\right]^{n-|\mathbf l|-1}.
\end{split}
\end{equation}
Since $f\in Lip_{M}(\mu,S)$, we can get
\begin{equation}\label{eq:3.4}
\begin{split}
\left|G^{\beta}_{n}(f;\mathbf y)-G^{\beta}_{n}(f;\mathbf x)\right|
\leq &
M\left(1+n\beta\right)^{1-n}\sum^{n}_{l_{1}=0}\sum^{n-l_{1}}_{l_{2}=0}\left[\left(\frac{l_{1}}{n}\right)^{\mu}+\left(\frac{l_{2}}{n}\right)^{\mu}\right]\binom{{n}}{\mathbf l}\\
&\times(\mathbf y-\mathbf x)^{\mathbf e}(\mathbf y-\mathbf x+\mathbf
l\beta)^{\mathbf l-\mathbf e}(1-|\mathbf y-\mathbf
x|)\\
&\times\left[1-|\mathbf y-\mathbf x|+(n-|\mathbf
l|)\beta\right]^{n-|\mathbf l|-1}\\
=&M\left[G^{\beta}_{n}\left(t^{\mu}_{1};\mathbf y-\mathbf
x\right)+G^{\beta}_{n}\left(t^{\mu}_{2};\mathbf y-\mathbf
x\right)\right].
\end{split}
\end{equation}
Now consider the term $G^{\beta}_{n}\left(t^{\mu}_{1};\mathbf
y-\mathbf x\right)$. With the help of the equality
$\binom{n}{\mathbf l}=\binom{n}{l_{1}}\binom{n-l_{1}}{l_{2}}$ we can
write
\begin{equation*}
\begin{split}
G^{\beta}_{n}\left(t^{\mu}_{1};\mathbf y-\mathbf x\right)
=&\left(1+n\beta\right)^{1-n}\sum^{n}_{l_{1}=0}\sum^{n-l_{1}}_{l_{2}=0}\left(\frac{l_{1}}{n}\right)^{\mu}\binom{{n}}{\mathbf
l} (\mathbf y-\mathbf x)^{\mathbf e}(\mathbf y-\mathbf x+\mathbf
l\beta)^{\mathbf l-\mathbf e}\\&\times(1-|\mathbf y-\mathbf x|)
\left[1-|\mathbf y-\mathbf x|+(n-|\mathbf
l|)\beta\right]^{n-|\mathbf l|-1}\\
=&\left(1+n\beta\right)^{1-n}\sum^{n}_{l_{1}=0}\left(\frac{l_{1}}{n}\right)^{\mu}\binom{n}{l_{1}}(y_{1}-x_{1})
(y_{1}-x_{1}+l_{1}\beta)^{l_{1}-1}\\
&\times
\sum^{n-l_{1}}_{l_{2}=0}\binom{n-l_{1}}{l_{2}}(y_{2}-x_{2})(y_{2}-x_{2}+l_{2}\beta)^{l_{2}-1}\\
&\times(1-|\mathbf y-\mathbf x|) \left[1-|\mathbf y-\mathbf
x|+(n-|\mathbf l|)\beta\right]^{n-|\mathbf l|-1}.
\end{split}
\end{equation*}
In the equality (\ref{eq:1.1}), if we take $y_{2}-x_{2}$,
$1-|\mathbf y-\mathbf x|$ and $n-l_{1}$ in place of $u$, $v$ and
$m$, respectively, then we get
\begin{equation*}
\begin{split}
&\left[1-(y_{1}-x_{1})\right]\left[1-(y_{1}-x_{1})+(n-l_{1})\beta\right]^{n-l_{1}-1}\\
=&
\sum^{n-l_{1}}_{l_{2}=0}\binom{n-l_{1}}{l_{2}}(y_{2}-x_{2})(y_{2}-x_{2}+l_{2}\beta)^{l_{2}-1}
(1-|\mathbf y-\mathbf x|)\\
&\times \left[1-|\mathbf y-\mathbf x|+(n-|\mathbf
l|)\beta\right]^{n-|\mathbf l|-1}.
\end{split}
\end{equation*}
Therefore,
\begin{equation*}
\begin{split}
G^{\beta}_{n}\left(t^{\mu}_{1};\mathbf y-\mathbf x\right)
=&\left(1+n\beta\right)^{1-n}\sum^{n}_{l_{1}=0}\left(\frac{l_{1}}{n}\right)^{\mu}\binom{n}{l_{1}}(y_{1}-x_{1})
(y_{1}-x_{1}+l_{1}\beta)^{l_{1}-1}\\
&\times\left[1-(y_{1}-x_{1})\right]\left[1-(y_{1}-x_{1})+(n-l_{1})\beta\right]^{n-l_{1}-1}\\
=&Q^{\beta}_{n}\left(t_{1}^{\mu}; y_{1}-x_{1}\right).
\end{split}
\end{equation*}
Applying the H\"{o}lder inequality with conjugate pairs
$p=\frac{1}{\mu}$ and $q=\frac{1}{1-\mu}$, we find
\begin{equation*}
G^{\beta}_{n}\left(t^{\mu}_{1};\mathbf y-\mathbf
x\right)=Q^{\beta}_{n}\left(t_{1}^{\mu};
y_{1}-x_{1}\right)\leq\left[Q^{\beta}_{n}(t_{1};
y_{1}-x_{1})\right]^{\mu}\left[Q^{\beta}_{n}(1;
y_{1}-x_{1})\right]^{\mu}.
\end{equation*}
As mentioned before, since the univariate Cheney-Sharma operators
given by $Q^{\beta}_{n}$ reproduce constant and linear functions we
reach to
\begin{equation*}
G^{\beta}_{n}\left(t^{\mu}_{1};\mathbf y-\mathbf
x\right)\leq(y_{1}-x_{1})^{\mu}.
\end{equation*}

Similarly,
\begin{equation*}
G^{\beta}_{n}\left(t^{\mu}_{2};\mathbf y-\mathbf x\right)\leq
(y_{2}-x_{2})^{\mu}.
\end{equation*}
Thus from (\ref{eq:3.4}) it follows that
\begin{equation*}
\left|G^{\beta}_{n}(f;\mathbf y)-G^{\beta}_{n}(f;\mathbf x)\right|
\leq  M\left[(y_{1}-x_{1})^{\mu}+(y_{2}-x_{2})^{\mu}\right]
\end{equation*}
which implies that $G^{\beta}_{n}(f)\in Lip_{M}(\mu,S)$. In a
similar way the same result can be found for $\mathbf x\geq\mathbf
y$.  If $x_{1}\geq y_{1}$, $x_{2}\leq y_{2}$, then we obtain from
the above result for $(y_{1},x_{2})\in S$ that
\begin{equation*}
\begin{split} \left|G^{\beta}_{n}(f;\mathbf
y)-G^{\beta}_{n}(f;\mathbf
x)\right|\leq&\left|G^{\beta}_{n}\left(f;(x_{1},x_{2})\right)-G^{\beta}_{n}\left(f;(y_{1},x_{2})\right)\right|\\
&+
\left|G^{\beta}_{n}\left(f;(y_{1},y_{2})\right)-G^{\beta}_{n}\left(f;(y_{1},x_{2})\right)\right|\\
\leq &M\left[(y_{1}-x_{1})^{\mu}+(y_{2}-x_{2})^{\mu}\right]
\end{split}
\end{equation*}
Finally, for the case $x_{1}\leq y_{1}$, $x_{2}\geq y_{2}$ we have
the same result. This completes the proof.
\end{proof}
\begin{thm}
If $\omega$ is a  of modulus of continuity function, then
$G^{\beta}_{n}(\omega)$ is also a modulus of continuity function for
all $n \in\mathbb{N}$.
\end{thm}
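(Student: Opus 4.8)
The plan is to verify the four requirements that make $G^{\beta}_{n}(\omega)$ a modulus of continuity function: nonnegativity together with continuity, and conditions $(a)$, $(b)$, $(c)$ of the definition. Nonnegativity and continuity are immediate, since $G^{\beta}_{n}$ is a positive linear operator whose basis functions $\binom{n}{\mathbf k}\mathbf x^{\mathbf e}(\mathbf x+\mathbf k\beta)^{\mathbf k-\mathbf e}(1-|\mathbf x|)\left[1-|\mathbf x|+(n-|\mathbf k|)\beta\right]^{n-|\mathbf k|-1}$ are polynomials that are nonnegative on $S$ (there $x_{i}\geq 0$, $1-|\mathbf x|\geq 0$ and $\beta\geq 0$), so $G^{\beta}_{n}(\omega)$ is a nonnegative continuous (in fact polynomial) function on $S$. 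Condition $(a)$ follows by evaluating (\ref{eq:2.1}) at $\mathbf x=\mathbf 0$: a basis function with $\mathbf k\neq\mathbf 0$ carries a factor $x_{1}(x_{1}+k_{1}\beta)^{k_{1}-1}$ or $x_{2}(x_{2}+k_{2}\beta)^{k_{2}-1}$ with the corresponding $k_{i}\geq 1$, hence vanishes at $\mathbf 0$, while the $\mathbf k=\mathbf 0$ term equals $(1+n\beta)^{1-n}\,\omega(\mathbf 0)\,(1+n\beta)^{n-1}=\omega(\mathbf 0)=0$; therefore $G^{\beta}_{n}(\omega;\mathbf 0)=0$.

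Both remaining conditions rest on the identity obtained in the proof of the preceding theorem upon combining (\ref{eq:3.1}) and (\ref{eq:3.2}), applied to the function $\omega$: for $\mathbf x,\mathbf y\in S$ with $\mathbf y\geq\mathbf x$,
\begin{equation*}
\begin{split}
G^{\beta}_{n}(\omega;\mathbf y)-G^{\beta}_{n}(\omega;\mathbf x)=&(1+n\beta)^{1-n}\sum_{|\mathbf k|+|\mathbf l|\leq n}\left[\omega\Big(\tfrac{\mathbf k+\mathbf l}{n}\Big)-\omega\Big(\tfrac{\mathbf k}{n}\Big)\right]\frac{n!}{\mathbf k!\,\mathbf l!\,(n-|\mathbf k|-|\mathbf l|)!}\\
&\times\mathbf x^{\mathbf e}(\mathbf x+\mathbf k\beta)^{\mathbf k-\mathbf e}(\mathbf y-\mathbf x)^{\mathbf e}(\mathbf y-\mathbf x+\mathbf l\beta)^{\mathbf l-\mathbf e}\\
&\times(1-|\mathbf y|)\left[1-|\mathbf y|+(n-|\mathbf k|-|\mathbf l|)\beta\right]^{n-|\mathbf k|-|\mathbf l|-1},
\end{split}
\end{equation*}
with $\mathbf k,\mathbf l\in\mathbb{N}_{0}^{2}$. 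Because $\mathbf x\geq\mathbf 0$, $\mathbf y-\mathbf x\geq\mathbf 0$, $|\mathbf y|\leq 1$ and $\beta\geq 0$, every factor in a summand except the bracketed difference of values of $\omega$ is nonnegative; I will use this structural fact twice. For $(b)$: when $\mathbf x\leq\mathbf y$ in $S$ we have $\tfrac{\mathbf k+\mathbf l}{n}\geq\tfrac{\mathbf k}{n}$ since $\mathbf l\geq\mathbf 0$, so the monotonicity of $\omega$ forces the bracket to be $\geq 0$; hence $G^{\beta}_{n}(\omega;\mathbf y)\geq G^{\beta}_{n}(\omega;\mathbf x)$, which is exactly condition $(b)$ for $G^{\beta}_{n}(\omega)$.

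For $(c)$, take $\mathbf u,\mathbf v$ and $\mathbf u+\mathbf v$ all in $S$; then $\mathbf u+\mathbf v\geq\mathbf u$. Using the identity with $\mathbf x=\mathbf u$ and $\mathbf y=\mathbf u+\mathbf v$, the semi-additivity of $\omega$ gives $\omega\big(\tfrac{\mathbf k+\mathbf l}{n}\big)-\omega\big(\tfrac{\mathbf k}{n}\big)\leq\omega\big(\tfrac{\mathbf l}{n}\big)$, and the nonnegativity of the other factors then yields
\begin{equation*}
\begin{split}
G^{\beta}_{n}(\omega;\mathbf u+\mathbf v)-G^{\beta}_{n}(\omega;\mathbf u)\leq&(1+n\beta)^{1-n}\sum_{|\mathbf k|+|\mathbf l|\leq n}\omega\Big(\tfrac{\mathbf l}{n}\Big)\frac{n!}{\mathbf k!\,\mathbf l!\,(n-|\mathbf k|-|\mathbf l|)!}\,\mathbf u^{\mathbf e}(\mathbf u+\mathbf k\beta)^{\mathbf k-\mathbf e}\\
&\times\mathbf v^{\mathbf e}(\mathbf v+\mathbf l\beta)^{\mathbf l-\mathbf e}(1-|\mathbf u+\mathbf v|)\left[1-|\mathbf u+\mathbf v|+(n-|\mathbf k|-|\mathbf l|)\beta\right]^{n-|\mathbf k|-|\mathbf l|-1}.
\end{split}
\end{equation*}
Since $\omega(\tfrac{\mathbf l}{n})$ does not involve $\mathbf k$, the plan is to carry out the $\mathbf k$-summation first: writing $\frac{n!}{\mathbf k!\,\mathbf l!\,(n-|\mathbf k|-|\mathbf l|)!}=\binom{n}{\mathbf l}\binom{n-|\mathbf l|}{k_{1}}\binom{n-k_{1}-|\mathbf l|}{k_{2}}$ and applying the Abel-Jensen identity (\ref{eq:1.1}) twice — first in $k_{2}$ with $v=1-|\mathbf u+\mathbf v|$, then in $k_{1}$, exactly as in the reduction performed in the proof of the preceding theorem — the $\mathbf k$-dependent part of the summand collapses to $\binom{n}{\mathbf l}(1-|\mathbf v|)\left[1-|\mathbf v|+(n-|\mathbf l|)\beta\right]^{n-|\mathbf l|-1}$, where one uses $|\mathbf u|+1-|\mathbf u+\mathbf v|=1-|\mathbf v|$ (valid since $\mathbf u,\mathbf v\geq\mathbf 0$). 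Thus the right-hand side is exactly $G^{\beta}_{n}(\omega;\mathbf v)$, so $G^{\beta}_{n}(\omega;\mathbf u+\mathbf v)\leq G^{\beta}_{n}(\omega;\mathbf u)+G^{\beta}_{n}(\omega;\mathbf v)$, which is condition $(c)$. The only genuine obstacle is this final two-fold Abel-Jensen collapse of the $\mathbf k$-sum, but it coincides with the reduction already carried out in the preceding proof — the sole difference being that the $\mathbf k$-independent factor is now $\omega(\mathbf l/n)$ rather than $M\big[(l_{1}/n)^{\mu}+(l_{2}/n)^{\mu}\big]$ — so it transfers verbatim; everything else is routine bookkeeping against conditions $(a)$–$(c)$.
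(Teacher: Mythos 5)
Your proposal is correct and follows essentially the same route as the paper: apply the difference identity derived in the proof of the Lipschitz theorem to $\omega$, read off monotonicity from the nonnegativity of all factors, and use semi-additivity of $\omega$ to make the bracket $\mathbf k$-free so that the Abel--Jensen collapse of the $\mathbf k$-sum yields $G^{\beta}_{n}(\omega;\mathbf v)$. You are in fact slightly more careful than the paper in keeping the $\mathbf k$-summation explicit until the bracket no longer depends on $\mathbf k$, in stating the semi-additivity step as an inequality rather than an equality, and in verifying nonnegativity, continuity and condition $(a)$ explicitly.
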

\begin{proof}
Let $\mathbf x,\mathbf y\in S$ such that $\mathbf y\geq \mathbf x$.
Regarding $f$ as a modulus of continuity function $\omega$ in
(\ref{eq:3.3}) we have
\begin{equation*}
\begin{split}
G^{\beta}_{n}(\omega;\mathbf y)-G^{\beta}_{n}(\omega;\mathbf x)
=&\left(1+n\beta\right)^{1-n}\sum^{n}_{l_{1}=0}\sum^{n-l_{1}}_{l_{2}=0}\left[\omega\left(\frac{\mathbf
k+\mathbf l}{n}\right)-\omega\left(\frac{\mathbf
k}{n}\right)\right]\binom{{n}}{\mathbf l}\\
&\times(\mathbf y-\mathbf x)^{\mathbf e}(\mathbf y-\mathbf x+\mathbf
l\beta)^{\mathbf l-\mathbf e}(1-|\mathbf y-\mathbf
x|)\\
&\times\left[1-|\mathbf y-\mathbf x|+(n-|\mathbf
l|)\beta\right]^{n-|\mathbf l|-1}
\end{split}
\end{equation*}
which means that $G^{\beta}_{n}(\omega;\mathbf y)\geq
G^{\beta}_{n}(\omega;\mathbf x)$ when  $\mathbf y\geq \mathbf x$.\\
Moreover, from the property $(c)$ of modulus of continuity function
$\omega$, we can write
\begin{equation*}
\begin{split}
G^{\beta}_{n}(\omega;\mathbf y)-G^{\beta}_{n}(\omega;\mathbf x)
=&\left(1+n\beta\right)^{1-n}\sum^{n}_{l_{1}=0}\sum^{n-l_{1}}_{l_{2}=0}\omega\left(\frac{\mathbf
l}{n}\right)\binom{{n}}{\mathbf l}\\
&\times(\mathbf y-\mathbf x)^{\mathbf e}(\mathbf y-\mathbf x+\mathbf
l\beta)^{\mathbf l-\mathbf e}(1-|\mathbf y-\mathbf
x|)\\
&\times\left[1-|\mathbf y-\mathbf x|+(n-|\mathbf
l|)\beta\right]^{n-|\mathbf l|-1}\\
=&G^{\beta}_{n}(\omega;\mathbf y-\mathbf x).
\end{split}
\end{equation*}
This shows that $G^{\beta}_{n}(\omega)$ is semi-additive. Finally,
from the definition of $G^{\beta}_{n}$ it is obvious that
$G^{\beta}_{n}(\omega;\mathbf 0)=\omega(\mathbf 0)=0$. Therefore
$G^{\beta}_{n}(\omega)$ itself is a function of modulus of
continuity when $\omega$  is so.
\end{proof}

% ----------------------------------------------------------------

\begin{center}

\end{center}

% ----------------------------------------------------------------


\begin{thebibliography}{9999}

\bibitem{ar} O. Agratini, I. A. Rus, \textit{Iterates of a class of dicsrete linear operators via contraction principle},
Comment. Math. Univ. Carolin., \textbf{44}(2003), no.3, 555-563.

\bibitem{al} F. Altomare, M. Campiti, \textit{Korovkin-type approximaton theory and its applications}, Walter de Gruyter, Berlin-New York, 1994.

\bibitem{gy} G. Ba\c{s}canbaz-Tunca, Y. Tuncer,\textit{On a Chlodovsky variant of a multivariate beta operator}, J. Comput. Appl. Math., \textbf{235}(2011), no.16, 4816-4824.

\bibitem{gf} G. Ba\c{s}canbaz-Tunca, F. Ta\c{s}delen,\textit{On Chlodovsky form of the Meyer-König and Zeller operators}, An. Univ. Vest Timi\c{s}. Ser. Mat.-Inform.,
, \textbf{49}(2011), no.2, 137-144.

\bibitem{gha} G. Ba\c{s}canbaz-Tunca, H. G. \.{I}nce-\.{I}larslan, A. Eren\c{c}in, \textit{Bivariate Bernstein type operators}, Appl. Math. Comput., \textbf{273}(2016), 543-552.

\bibitem{gcs} G. Ba\c{s}canbaz-Tunca,  A. Eren\c{c}in, F. Ta\c{s}delen, \textit{Some properties of Bernstein type Cheney and Sharma operators}(submitted).

\bibitem{bep}  B.M. Brown, D. Elliott, D.F. Paget, \textit{Lipschitz constants for the Bernstein polynomials of a Lipschitz continuous function},
J. Approx. Theory, \textbf{49}(1987), no.2, 196-199.

\bibitem{fc}  F. Cao, \textit{Modulus of continuity, K-functional and Stancu operator on a simplex}, Indian J. Pure Appl. Math., \textbf{35}(2004), no.12, 1343-1364.

\bibitem{cao}  F. Cao, C. Ding, Z. Xu, \textit{On multivariate Baskakov operator}, J. Math. Anal. Appl., \textbf{307}(2005), no.1, 274-291.


\bibitem{ca}  T. C\v{a}tina\c{s}, D. Otrocol,  \textit{Iterates of multivariate Cheney-Sharma operators}, J. Comput. Anal. Appl., \textbf{15}(2013), no.7,
1240-1246.

\bibitem{cs} E. W. Cheney, A. Sharma, \textit{On a generalization of Bernstein polynomials}, Riv. Mat. Univ. Parma, \textbf{2}(5)(1964), 77-84.

\bibitem{c} M. Cr\v{a}ciun, \textit{Approximation operators constructed by means of Sheffer sequences}, Rev. Anal. Num\'{e}r. Th\'{e}or. Approx.,
 \textbf{30}(2001), no.2, 135-150.

\bibitem{df}  C. Ding, F. Cao, \textit{K-functionals and multivariate Bernstein polynomials}, J. Approx. Theory,
 \textbf{(155)}(2008), no.2, 125-135.


\bibitem{agf}  A. Eren\c{c}in, G. Ba\c{s}canbaz-Tunca, F. Ta\c{s}delen, \textit{Some preservation properties of MKZ-Stancu type operators}, Sarejevo J. Math.,
 10\textbf{(22)}(2014), no.1, 93-102.

\bibitem{agf1}  A. Eren\c{c}in, G. Ba\c{s}canbaz-Tunca, F. Ta\c{s}delen, \textit{Some  properties of the operators defined by Lupa\c{s}},
 Rev. Anal. Num\'{e}r. Th\'{e}or. Approx.,
 \textbf{43}(2014), no.2, 168-174.

\bibitem{fa}  M. D. Farca\c{s}, \textit{About Bernstein polynomials}, An. Univ. Craivo Ser. Mat. Inform.,
 \textbf{(35)}(2008), 117-121.

\bibitem{kh}  M. K. Khan, \textit{Approximation properties of Beta operators}, Progress in approximation theory, Academic Press, Boston, MA, (1991), 483-495.

\bibitem{khp}  M. K. Khan,  M. A. Peters, \textit{Lipschitz constants for some approximation operators of a Lipschitz continuous function}, J. Approx. Theory, \textbf{59}(1989), no.3, 307-315.


\bibitem{li}  Z. Li, \textit{Bernstein polynomials and modulus of continuity}, J. Approx. Theory, \textbf{102}(2000), no.1, 171-174.

\bibitem{lin} T. Lindvall, \textit{Bernstein polynomials and the law of large numbers}, Math. Sci., \textbf{7}(1982), no.2, 127-139.

\bibitem{sc} D. D. Stancu, C. Cisma\c{s}iu,\textit{On an
approximating linear positive operator of Cheney-Sharma}, Rev. Anal.
Num\'{e}r. Th\'{e}or. Approx.,
 \textbf{26}(1997), no.1-2, 221-227.

 \bibitem{slc} D. D. Stancu, L. A. C\v{a}bulea, D. Pop, \textit{Approximation of bivariate functions by means of the operators $S^{\alpha,\beta;a,b}_{m,n}$},
 Stud. Univ. Babe\c{s}-Bolyai Math., \textbf{47}(2002), no.4, 105-113.

 \bibitem{s} D. D. Stancu, \textit{ Use of an identity of A. Hurwitz for construction of a linear positive operator of approximation}, Rev. Anal. Num\'{e}r. Th\'{e}or. Approx.,
 \textbf{31}(2002), no.1, 115-118.

\bibitem{sst} D. D. Stancu, E. I. Stoica,  \textit{On the use Abel-Jensen type combinatorial formulas for construction and
investigation of some algebraic polynomial operators of
approximation}, Stud. Univ. Babe\c{s}-Bolyai Math.,
\textbf{54}(2009), no.4, 167-182.

\bibitem{es} E. I. Stoica,  \textit{ On the  combinatorial identities of Abel-Hurwitz type and their use in constructive theory of functions
}, Stud. Univ. Babe\c{s}-Bolyai Math., \textbf{55}(2010), no.4,
249-257.

\bibitem{t} I. Ta\c{s}cu,\textit{Approximation of bivariate functions by operators of Stancu-Hurwitz type}, Facta Univ. Ser. Math. Inform.,
(2005), no.20, 33-39.

\bibitem{ta} I. Ta\c{s}cu, A. Horvat-Marc, \textit{Construction  of Stancu-Hurwitz  operator for two variables}, Acta Univ. Apulensis Math. Inform.,
(2006), no.11, 97-101.

\bibitem{ti} T. Trif, \textit{An elementary proof of the preservation of Lipschitz constants by the Meyer-König and Zeller operators},
J. Inequal. Pure Appl. Math., \textbf{4}(2003), no.5, Article90,
3pp.

\end{thebibliography}
\end{document}